\theoremstyle{plain}
\newtheorem{corollary}{\bf Corollary}
\newtheorem{definition}{\bf Definition}
\newtheorem{example}{\bf Example}
\newtheorem{lemma}{\bf Lemma}
\newtheorem{theorem}{\bf Theorem}
\numberwithin{equation}{section}
\begin{document}
\title[generalized $m$-quasi-Einstein metrics]{Characterizations and integral formulae for generalized $m$-quasi-Einstein metrics}
\author{Abd\^{e}nago Barros$^{1}$}
\address{$^{1}$ Departamento de Matem\'{a}tica-UFC\\
60455-760-Fortaleza-CE-BR} \email{abbarros@mat.ufc.br}
\thanks{$^{1,2}$ Partially supported by CNPq-BR}
\author{Ernani Ribeiro Jr$^{2}$}
\address{$^{2}$ Departamento de Matem\'{a}tica-UFC\\
60455-760-Fortaleza-CE-BR} \email{ernani@mat.ufc.br}
\keywords{Ricci Soliton, quasi-Einstein metrics, Bakry-Emery Ricci tensor, scalar curvature}
\subjclass[2000]{Primary 53C25, 53C20, 53C21; Secondary 53C65}
\urladdr{http://www.mat.ufc.br/pgmat}
\date{May 29, 2012}
\begin{abstract}
The aim of this paper is to present some structural equations for generalized $m$-quasi-Einstein metrics $(M^{n},\,g,\,\nabla f,\,\lambda),$ which was defined recently by Catino in \cite{catino}. In addition, supposing that $M^n$ is an Einstein manifold we shall show that it is a space form with a well defined potential $f.$ Finally, we shall derive a formula for the Laplacian of its scalar curvature which will give some integral formulae for such a class of compact manifolds that permit to obtain some rigidity results.
\end{abstract}

\maketitle

\section{Introduction and statement of the main results}
In recent years, much attention has been given to classification of Riemannian manifolds admitting an Einstein-like structure, which are natural generalization of the classical Ricci solitons. For instance, Catino in \cite{catino} introduced a class of special Riemannian metrics which naturally generalizes the Einstein condition. More precisely, he defined that a complete Riemannian manifold $(M^{n},g)\,,n\ge2$, is a \emph{generalized quasi-Einstein metric} if there exist three smooth functions $f$, $\lambda$ and $\mu$ on $M$, such that
\begin{equation}
\label{eqCat}
Ric+\nabla^{2}f-\mu df\otimes df=\lambda g,
\end{equation}where $Ric$ denotes the Ricci tensor of $(M^n,\,g),$ while $\nabla^{2}$  and $\otimes $ stand for the Hessian and the tensorial product, respectively.

As a particular case of (\ref{eqCat}) we shall consider the following.
\begin{definition}
We say that $(M^n,\,g)$ is a \emph{generalized $m$-quasi-Einstein metric} if there exist two smooth functions $f$ and $\lambda$ on $M$ satisfying
\begin{equation}
\label{eqprinc}
Ric+\nabla^{2}f-\frac{1}{m}df\otimes df=\lambda g,
\end{equation}
where  $0<m\leq\infty$ is an integer. The tensor $Ric_{f}=Ric+\nabla^{2}f-\frac{1}{m}df\otimes df$ is called Bakry-Emery Ricci tensor.
\end{definition}

In particular, we have

\begin{equation}
\label{eqprinc1}
{Ric}(\nabla f,\nabla f)+\langle \nabla_{\nabla f}\nabla f,\nabla f\rangle =\frac{1}{m}|\nabla f|^{4}+\lambda |\nabla f|^{2},
\end{equation}where $\langle\,,\,\rangle$ and $|\,\,|$ stand for the metric $g$ and its associated norm, respectively.

Moreover, if $R$ stands for the scalar curvature of $(M^n,\,g)$, then, taking trace of both members of equation (\ref{eqprinc}) we deduce
\begin{equation}
\label{eqntrace}
R+\Delta f-\frac{1}{m}|\nabla f|^{2}=\lambda n.
\end{equation}

Thereby we derive
\begin{equation}
\label{eqntrace2}
\langle \nabla f,\nabla R\rangle+\langle \nabla f,\nabla \Delta f\rangle=\frac{1}{m}\langle\nabla f, \nabla|\nabla f|^{2}\rangle+n\langle\nabla \lambda,\nabla f\rangle.
\end{equation}

One notices that combining equations (\ref{eqprinc}) and (\ref{eqntrace}) we infer
\begin{equation}
\label{eqntrfr}
{\nabla^{2}f-\frac{\Delta f}{n}\,g=\frac{1}{m}\big(df\otimes df-\frac{1}{n}|\nabla f|^2g\big)-\big(Ric-\frac{R}{n}g\big)}.
\end{equation}

It is important to point out that if $m=\infty$ and $\lambda$ is constant,  equation (\ref{eqprinc}) reduces to one associated to a gradient Ricci soliton, for a good survey in this subject we recommend the work due to Cao in \cite{cao}, as well as if $\lambda$ is only constant and $m$ is a positive integer, it corresponds to $m$-quasi-Einstein metrics that are exactly those $n$-dimensional manifolds which are the base of an $(n+m)$-dimensional Einstein warped product, for more details see \cite{csw}, \cite{case}, \cite{hepeterwylie} and \cite{brG}. The $1$-quasi-Einstein metrics satisfying $\Delta e^{-f}+\lambda e^{-f}=0$ are more commonly called \emph{static metrics}, for more details see \cite{Corvino}. Static metrics have been studied extensively for their connection to scalar curvature, the positive mass theorem and general relativity, see e.g. \cite{Anderson}, \cite{AndersonKhuri} and \cite{Corvino}. In \cite{hepeterwylie} it was given some classification for $m$-quasi-Einstein metrics where the base has non-empty boundary. Moreover, they have proved a characterization for $m$-quasi-Einstein metric when the base is locally conformally flat. In addition, considering $m=\infty$ in equation (\ref{eqprinc}) we obtain the almost Ricci soliton equation, for more details see \cite{prrs} and \cite{br}. We also point out that, Catino \cite{catino} have proved that  around any regular point of $f$ a generalized $m$-quasi Einstein metric $\big(M^n,\,g,\,\nabla f,\,\lambda\big)$ with harmonic Weyl tensor and $W(\nabla f,\cdots,\nabla f) = 0$ is locally a warped product with $(n-1)$-dimensional Einstein fibers.

A generalized $m$-quasi-Einstein manifold $\big(M^n,\,g,\,\nabla f,\,\lambda\big)$  will be called \emph{trivial} if the potential function $f$ is constant. Otherwise, it will be called \emph{nontrivial}.

We observe that the triviality definition implies that $M^n$ is an Einstein manifold, but the converse is not true. Meanwhile, we shall show in Theorem \ref{thmsph} that when $(M^n,\,g,\,\nabla f,\,\lambda),\,n\ge 3,$ is Einstein, but not trivial, it will be isometric to a space form with a well defined potential $f$. Introducing the function $u=e^{-\frac{f}{m}}$ on $M$ we immediately have $\nabla u=-\frac{u}{m}\nabla f$, moreover the next relation, which can be found in \cite{csw}, is true
\begin{equation}
\label{conformal}
\nabla^2f -\frac{1}{m}df\otimes df=-\frac{m}{u}\nabla^2u.
\end{equation}In particular, $\nabla u $ is a conformal vector field, i.e. $\frac{1}{2}\mathcal{L}_{\nabla u}g=\rho\,g,$ for some smooth function $\rho$ defined on $M$, if and only if $M^n$ is an Einstein manifold. Hence, on a surface $M^2,\,\nabla u$ is always a conformal vector field.

Before to announce our main result we present a family of nontrivial examples on a space form. Let us start with a standard sphere $(\Bbb{S}^n,g_0),$ where $g_0$ is its canonical metric.
\begin{example}\label{exp1}On the standard unit sphere $(\Bbb{S}^n,g_0),\,n\ge2,$ we consider the following function
\begin{equation}
\label{potf}f=-m \ln \big(\tau-\frac{h_v}{n} \big),
\end{equation}where $\tau$ is a real parameter lying in $(1/n,+\infty)$ and $h_v$ is some height function with respect to a fixed unit vector $v\in \Bbb{S}^{n}\subset \Bbb{R}^{n+1}$, here we are considering   $\Bbb{S}^n$ as a hypersurface in  $\Bbb{R}^{n+1}$, and $h_v:\Bbb{S}^{n}\to \Bbb{R}$ is given by $h_v(x)=\langle x,v\rangle$. Taking into account that $\nabla^2h_v=-h_vg_0$ and $u=e^{-\frac{f}{m}}=\tau-\frac{h_v}{n}$, we deduce from (\ref{conformal}) that
\begin{equation}
\label{eqnexp1}
 \nabla^{2}f-\frac{1}{m}df\otimes df=-m\frac{\tau -u}{u}g_0.
 \end{equation}

Since the Ricci tensor of $(\Bbb{S}^n,g_0)$ is given by $Ric=(n-1)g_0,$ it is enough to consider $\lambda =(n-1)-m\frac{\tau -u}{u}$ in order to build a desired non trivial such structure on $(\Bbb{S}^n,g_0)$.
\end{example}

We now present a similar example as before on the Euclidean space $(\Bbb{R}^n,g_0),$ where $g_0$ is its canonical metric.
\begin{example}\label{exp2}On the Euclidean space $(\Bbb{R}^n,g_0),\,n\ge2,$ we consider the following function
\begin{equation}
\label{potf1}f=-m \ln \big(\tau+|x|^2 \big),
\end{equation}where $\tau$ is a positive real parameter and $|x|$ is the Euclidean norm. Taking into account that $\nabla^2|x|^2 =2g_0$ and $u=e^{-\frac{f}{m}}=\tau+|x|^2$, we deduce from (\ref{conformal}) that
\begin{equation}
\label{eqnexp1}
 \nabla^{2}f-\frac{1}{m}df\otimes df=-2\frac{m}{u}g_0.
 \end{equation}
  Since the Ricci tensor of $(\Bbb{R}^n,g_0)$ is flat, it is enough to consider $\lambda =-2\frac{m}{u}$ in order to obtain a desired non trivial structure on $(\Bbb{R}^n,g_0)$.
\end{example}
On the other hand, concerning to hyperbolic space we have the following.
\begin{example}\label{exp3}Regarding the hyperbolic space $\Bbb{H}^{n}(-1)\subset\Bbb{R}^{n,1}:\langle x,x\rangle_{0}=-1, x_{1}>0,$ where $\Bbb{R}^{n,1}$ is the Euclidean space $\Bbb{R}^{n+1}$ endowed with the inner product $\langle x,x\rangle_{0}=-x_{1}^{2}+x_{2}^{2}+\ldots+x_{n+1}^{2}$. We now follow the argument used on $\Bbb{S}^{n}.$ First, we  fixe a vector $v\in \Bbb{H}^{n}(-1)\subset\Bbb{R}^{n,1}$  and we consider a hight function $h_v:\Bbb{H}^{n}(-1)\to \Bbb{R}$ given by $h_v(x)=\langle x,v\rangle_{0}$. In this case, we have $\nabla^2 h_{v}=h_{v}g_{0}.$ Then, taking
\begin{equation}
\label{exhyp}u=e^{-\frac{f}{m}}=\tau+h_{v},\,\tau>-1
\end{equation}we have from (\ref{conformal})
\begin{equation}\label{eqhyperbolic}
\nabla^{2}f-\frac{1}{m}df\otimes df=-m\frac{u-\tau}{u}g_{0}.
\end{equation}
Reasoning as in the spherical case it is enough to consider $\lambda =-(n-1)-m\frac{\tau -u}{u}$ in order to build a non trivial such structure on $(\Bbb{H}^n,g_0)$.
\end{example}

Now we announce the main theorem.
\begin{theorem}
\label{thmsph}Let $\big(M^n,\,g,\,\nabla f,\,\lambda\big)$ be a non trivial generalized $m$-quasi-Einstein metric with $n\ge3.$ Suppose that either $(M^n,\,g)$ is an Einstein manifold or $\nabla u$ is a conformal vector field. Then one the following statements holds:
\begin{enumerate}
\item $M^n$ is isometric to a standard sphere $\Bbb{S}^{n}(r).$ In particular, $f$ is, up to constant, given by (\ref{potf}).
\item $M^{n}$ is isometric to a Euclidean space $\Bbb{R}^{n}$. In particular, $f$ is, up to change of coordinates, given by (\ref{potf1}).
\item $M^{n}$ is isometric to a hyperbolic space $\Bbb{H}^{n},$ provided $u$ has only one critical point. In particular, $f$ is, up to constant, given according to (\ref{exhyp}).
\end{enumerate}
\end{theorem}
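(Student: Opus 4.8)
The plan is to exploit the conformality of $\nabla u$, which by the remark following equation (\ref{conformal}) is equivalent to $(M^n,g)$ being Einstein, so both hypotheses coincide and we may work with a single assumption. First I would write $\frac{1}{2}\mathcal{L}_{\nabla u}g = \rho\, g$, i.e. $\nabla^2 u = \rho\, g$, for some smooth function $\rho$ on $M$. Since $M$ is Einstein, $Ric = \frac{R}{n} g$ with $R$ constant (for $n\ge 3$, by Schur), and combining with (\ref{eqprinc}) rewritten via (\ref{conformal}) as $Ric - \frac{m}{u}\nabla^2 u = \lambda g$ we get $-\frac{m\rho}{u} = \lambda - \frac{R}{n}$, so $\lambda$ is determined by $u$ and $\rho$. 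The key classical input is the Obata-type / Tashiro theorem: a complete Riemannian manifold carrying a nonconstant function $u$ with $\nabla^2 u = \rho\, g$ is, according to the behaviour of the critical set of $u$, isometric to a sphere, Euclidean space, or hyperbolic space (equivalently a warped product over an interval with one or two endpoints collapsing). I would invoke this to split into the three cases.

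Next I would pin down $\rho$ in each case. Taking the divergence of $\nabla^2 u = \rho\, g$ and using the Bochner/commutation identity $\mathrm{div}(\nabla^2 u) = \nabla \Delta u + Ric(\nabla u, \cdot)$ with $\Delta u = n\rho$ gives $(n-1)\nabla \rho = -Ric(\nabla u,\cdot)^\sharp = -\frac{R}{n}\nabla u$, so $\nabla\rho = -\frac{R}{n(n-1)} \nabla u$, hence $\rho = -\frac{R}{n(n-1)} u + c$ for a constant $c$. Thus $u$ satisfies a linear second-order ODE along geodesics, $\nabla^2 u = \big(-\frac{R}{n(n-1)}u + c\big) g$, which is exactly the equation whose solutions on the model spaces are the height functions and radial functions appearing in Examples \ref{exp1}, \ref{exp2}, \ref{exp3}. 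For $R>0$ (sphere of radius $r$ with $R = n(n-1)/r^2$), solving the ODE recovers $u = \tau - h_v/n$ up to scaling, giving statement (1) and the formula (\ref{potf}) for $f = -m\ln u$; for $R=0$ one gets $u = \tau + |x|^2$ up to an affine change of coordinates, giving (2) and (\ref{potf1}); for $R<0$, completeness forces $u$ to be (an affine function of) $\tau + h_v$ with a single critical point, giving (3) and (\ref{exhyp}).

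The main obstacle I anticipate is the careful handling of the critical points of $u$ and the global structure near them, i.e. making the passage from the local ODE $\nabla^2 u = \rho\, g$ to a genuine global isometry with a model space. One must check that $\nabla u$ does not vanish identically (which is where nontriviality of $f$ enters, since $\nabla u = -\frac{u}{m}\nabla f$ and $u>0$), control the set $\{\nabla u = 0\}$, and in the hyperbolic case justify why exactly one critical point is needed to rule out the other constant-curvature-negative model and to force the correct sign of $R$; this is precisely why the statement of case (3) carries the extra hypothesis on the critical point. A secondary point is verifying that the resulting $\lambda$, once $u$ and $\rho$ are known explicitly, matches the $\lambda$ prescribed in each of Examples \ref{exp1}--\ref{exp3}, which is a direct substitution using $R = \pm n(n-1)$ or $R=0$ and $\lambda = \frac{R}{n} - \frac{m\rho}{u}$.
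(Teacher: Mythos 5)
Your proposal is correct and follows essentially the same route as the paper: the equivalence of the two hypotheses via (\ref{conformal}), Schur's lemma to make $R$ constant, the reduction to $\nabla^{2}u=\bigl(-\frac{R}{n(n-1)}u+c\bigr)g$, and Tashiro's theorem to identify the three model spaces and the corresponding potentials. The only cosmetic difference is that you obtain the constant $c$ directly by taking the divergence of $\nabla^{2}u=\rho\,g$, whereas the paper's Lemma \ref{lemM} runs the same divergence identity through the quantity $\lambda u$; the two computations are equivalent.
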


As a consequence of this theorem we obtain the following corollary.
\begin{corollary}
\label{corthmsph}Let $\big(M^n,\,g,\,\nabla f,\,\lambda\big),$ \,$ n\ge3,$ be a compact non trivial generalized $m$-quasi-Einstein metric such that $\int_{M}Ric(\nabla u,\nabla u)d\mu\geq\frac{n-1}{n}\int_{M}(\Delta u)^{2}d\mu,$ where $d\mu $  stands for the Riemannian measure associated to $g$. Then $M^n$ is isometric to a standard sphere  $\Bbb{S}^{n}(r)$. Moreover, the potential $f$ is the same of identity (\ref{potf}).
\end{corollary}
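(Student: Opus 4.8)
The plan is to translate everything into the conformal variable $u=e^{-\frac{f}{m}}$ and then run a Bochner-type integral identity. Using (\ref{conformal}), the defining equation (\ref{eqprinc}) becomes $Ric-\frac{m}{u}\nabla^{2}u=\lambda g$, and since the metric is nontrivial, $u$ is a non-constant smooth function on the compact manifold $M^n$. The whole argument will hinge on showing that the integral hypothesis forces the trace-free part of $\nabla^{2}u$ to vanish identically.

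First I would write down the classical Bochner formula for $u$,
\[
\tfrac{1}{2}\Delta|\nabla u|^{2}=|\nabla^{2}u|^{2}+\langle\nabla u,\nabla\Delta u\rangle+Ric(\nabla u,\nabla u),
\]
integrate it over the compact manifold, and use $\int_{M}\Delta|\nabla u|^{2}\,d\mu=0$ together with $\int_{M}\langle\nabla u,\nabla\Delta u\rangle\,d\mu=-\int_{M}(\Delta u)^{2}\,d\mu$ to get $\int_{M}|\nabla^{2}u|^{2}\,d\mu=\int_{M}(\Delta u)^{2}\,d\mu-\int_{M}Ric(\nabla u,\nabla u)\,d\mu$. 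Decomposing the Hessian into its trace and trace-free parts, $|\nabla^{2}u|^{2}=\big|\nabla^{2}u-\frac{\Delta u}{n}g\big|^{2}+\frac{(\Delta u)^{2}}{n}$, this rearranges to
\[
\int_{M}\Big|\nabla^{2}u-\tfrac{\Delta u}{n}g\Big|^{2}d\mu=\tfrac{n-1}{n}\int_{M}(\Delta u)^{2}\,d\mu-\int_{M}Ric(\nabla u,\nabla u)\,d\mu .
\]
Now the standing assumption $\int_{M}Ric(\nabla u,\nabla u)\,d\mu\ge\frac{n-1}{n}\int_{M}(\Delta u)^{2}\,d\mu$ makes the right-hand side $\le 0$, while the left-hand side is obviously $\ge 0$; hence both vanish and $\nabla^{2}u=\frac{\Delta u}{n}g$ everywhere on $M$.

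From this pointwise identity we read off $\frac{1}{2}\mathcal{L}_{\nabla u}g=\nabla^{2}u=\frac{\Delta u}{n}g$, so $\nabla u$ is a conformal vector field with factor $\rho=\frac{\Delta u}{n}$. Since $u$ is non-constant, Theorem \ref{thmsph} applies and places $M^n$ among the three model spaces; compactness immediately rules out the Euclidean and hyperbolic cases (in those the theorem yields a proper, noncompact potential $f$), leaving $M^n$ isometric to a standard sphere $\mathbb{S}^{n}(r)$ with $f$ given, up to an additive constant, by (\ref{potf}).

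I expect the delicate points to be organizational rather than geometric: one must confirm that compactness and smoothness legitimize the integrations by parts, that nontriviality is precisely what guarantees $u\not\equiv\text{const}$ so that Theorem \ref{thmsph} may be invoked, and that the equality case of the Bochner identity is extracted correctly — the hypothesis is exactly the sharp inequality, and it collapses onto the equation $\nabla^{2}u=\frac{\Delta u}{n}g$. Once that equation is in hand, the classification is entirely delegated to Theorem \ref{thmsph}.
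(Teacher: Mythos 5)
Your proposal is correct and follows essentially the same route as the paper: integrate the Bochner formula for $u$, identify $\frac{n-1}{n}\int_M(\Delta u)^2d\mu-\int_M Ric(\nabla u,\nabla u)d\mu$ as $\int_M|\nabla^2u-\frac{\Delta u}{n}g|^2d\mu\ge 0$, use the hypothesis to force this to vanish, conclude $\nabla u$ is a nontrivial conformal field, and invoke Theorem \ref{thmsph}. Your explicit remark that compactness eliminates the Euclidean and hyperbolic alternatives is a small but welcome clarification that the paper leaves implicit.
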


Before to announce the next results we point out that they are generalizations of ones found in \cite{Petwy} and \cite{abr} for Ricci solitons, \cite{br} for almost Ricci solitons and \cite{csw} for quasi-Einstein metrics. First, we have the following theorem.
\begin{theorem}
\label{thm1}
Let $\big(M^n,\,g,\,\nabla f,\,\lambda\big)$ be a compact generalized $m$-quasi-Einstein metric. Then $M^n$ is trivial provided:
\begin{enumerate}
\item $\int_{M}Ric(\nabla f,\nabla f)d\mu\leq\frac{2}{m}\int_{M}|\nabla f|^{2}\Delta f d\mu-(n-2)\int_{M}\langle\nabla\lambda,\nabla f\rangle d\mu.$
\item $R\ge\lambda n$ or $R\le \lambda n$.
\end{enumerate}
\end{theorem}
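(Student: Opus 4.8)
We treat the two conditions separately; in both cases the goal is to show that the potential $f$ is constant.

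\textbf{Condition (2).} The plan is to use the function $u=e^{-f/m}$ introduced above, for which $\nabla u=-\tfrac{u}{m}\nabla f$. A direct computation combined with the trace equation (\ref{eqntrace}) gives
\[
\Delta u=-\frac{u}{m}\Big(\Delta f-\tfrac1m|\nabla f|^{2}\Big)=\frac{u}{m}\,(R-\lambda n).
\]
Since $u>0$ on $M^{n}$, the hypothesis $R\ge\lambda n$ makes $u$ subharmonic, while $R\le\lambda n$ makes $u$ superharmonic. On a compact manifold $\int_{M}\Delta u\,d\mu=0$, so in either case $\Delta u\equiv0$; hence $u$ is constant, therefore $f$ is constant, i.e. $M^{n}$ is trivial.

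\textbf{Condition (1).} The idea is to derive an integral identity for $\int_{M}Ric(\nabla f,\nabla f)\,d\mu$ and to check that the stated inequality degenerates to $\int_{M}|\nabla^{2}f|^{2}\,d\mu\le0$. First I would rewrite (\ref{eqprinc}) as $\nabla^{2}f=\lambda g-Ric+\tfrac1m\,df\otimes df$ and take the pointwise inner product with $\nabla^{2}f$, getting
\[
|\nabla^{2}f|^{2}=\lambda\,\Delta f-\langle\nabla^{2}f,Ric\rangle+\tfrac1m\,\nabla^{2}f(\nabla f,\nabla f).
\]
Integrating over the compact manifold, and using $\nabla^{2}f(\nabla f,\nabla f)=\tfrac12\langle\nabla|\nabla f|^{2},\nabla f\rangle$, the integration by parts $\int_{M}\langle\nabla^{2}f,Ric\rangle\,d\mu=\tfrac12\int_{M}R\,\Delta f\,d\mu$ coming from the contracted second Bianchi identity $\operatorname{div}Ric=\tfrac12\nabla R$, and finally the trace equation (\ref{eqntrace}) to eliminate $R$, one should obtain
\[
\int_{M}|\nabla^{2}f|^{2}\,d\mu=\frac12\int_{M}(\Delta f)^{2}\,d\mu+\frac{n-2}{2}\int_{M}\langle\nabla\lambda,\nabla f\rangle\,d\mu-\frac1m\int_{M}|\nabla f|^{2}\Delta f\,d\mu.
\]

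Next I would combine this with the integrated Bochner formula
\[
0=\int_{M}|\nabla^{2}f|^{2}\,d\mu-\int_{M}(\Delta f)^{2}\,d\mu+\int_{M}Ric(\nabla f,\nabla f)\,d\mu,
\]
which gives $\int_{M}Ric(\nabla f,\nabla f)\,d\mu=\tfrac12\int_{M}(\Delta f)^{2}\,d\mu-\tfrac{n-2}{2}\int_{M}\langle\nabla\lambda,\nabla f\rangle\,d\mu+\tfrac1m\int_{M}|\nabla f|^{2}\Delta f\,d\mu$. Substituting this expression into the hypothesis of item (1) and simplifying, the $\langle\nabla\lambda,\nabla f\rangle$ and $|\nabla f|^{2}\Delta f$ terms recombine exactly into the right-hand side of the displayed identity for $\int_{M}|\nabla^{2}f|^{2}\,d\mu$, so condition (1) becomes precisely $\int_{M}|\nabla^{2}f|^{2}\,d\mu\le0$. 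Hence $\nabla^{2}f\equiv0$ on $M^{n}$, so $\Delta f\equiv0$; since $M^{n}$ is compact, $f$ is constant and the metric is trivial.

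The computations are routine; the only point that requires care is verifying that eliminating $R$ via (\ref{eqntrace}) produces exactly the coefficients $\tfrac1m$ and $\tfrac{n-2}{2}$ that appear in condition (1), so that the inequality collapses with no residual terms. I do not expect any serious obstacle beyond this bookkeeping.
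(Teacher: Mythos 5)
Your proof is correct and follows essentially the same route as the paper: your two integral identities (the one obtained by pairing the fundamental equation with $\nabla^{2}f$ and integrating, plus the integrated Bochner formula) combine to give exactly the paper's key identity $\int_{M}|\nabla^{2}f|^{2}d\mu=\int_{M}Ric(\nabla f,\nabla f)d\mu-\frac{2}{m}\int_{M}|\nabla f|^{2}\Delta f\,d\mu+(n-2)\int_{M}\langle\nabla\lambda,\nabla f\rangle\,d\mu$, which the paper obtains by integrating the pointwise formula of Lemma \ref{lem1}(1), and the coefficients do collapse condition (1) to $\int_{M}|\nabla^{2}f|^{2}d\mu\le 0$ as you claim. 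The argument for condition (2) is also the paper's, with one small caveat: since the theorem allows $m=\infty$, the function $u=e^{-f/m}$ degenerates in that case and you should instead apply the same sign-plus-Hopf argument directly to $\Delta f=\lambda n-R$ coming from (\ref{eqntrace}).
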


Now, if $\big(M^n,\,g,\,\nabla f,\,\lambda\big)$ is a generalized $m$-quasi-Einstein metric  and $m$ is finite, we shall present conditions in order to obtain $\nabla f\equiv 0.$
\begin{theorem}
\label{thm2}
Let $\big(M^n,\,g,\,\nabla f,\,\lambda\big)$ be a complete generalized $m$-quasi-Einstein metric with $m$ finite. Then $\nabla f\equiv 0$, if one of the following conditions holds:
\begin{enumerate}
\item $M^n$ is non compact,
$n\lambda\geq R$ and $|\nabla f|\in \mathrm{L}^{1}(M^n)$.  In particular, $M^n$ is an Einstein manifold.
\item $(M^n,\,g)$ is Einstein and $\nabla f$ is a conformal vector filed.
\end{enumerate}
\end{theorem}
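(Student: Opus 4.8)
The plan is to handle the two items separately; in both cases the argument rests on the trace identity (\ref{eqntrace}) together with an appropriate sign.

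For item (1), I would start from (\ref{eqntrace}), rewritten as $\Delta f=n\lambda-R+\frac{1}{m}|\nabla f|^{2}$. Under the hypothesis $n\lambda\ge R$ the two terms on the right are nonnegative, so $f$ is subharmonic and in fact $\Delta f\ge\frac{1}{m}|\nabla f|^{2}\ge 0$. Since $M^{n}$ is complete and non compact, fix $o\in M^{n}$ and take Lipschitz cutoffs $\varphi_{r}$ with $\varphi_{r}\equiv 1$ on $B_{r}(o)$, $\operatorname{supp}\varphi_{r}\subset B_{2r}(o)$ and $|\nabla\varphi_{r}|\le 2/r$. Integrating by parts and using $|\nabla f|\in\mathrm{L}^{1}(M^{n})$, for every $r>0$,
\[
0\le\int_{B_{r}(o)}\Delta f\,d\mu\le\int_{M}\varphi_{r}\,\Delta f\,d\mu=-\int_{M}\langle\nabla\varphi_{r},\nabla f\rangle\,d\mu\le\frac{2}{r}\int_{M}|\nabla f|\,d\mu .
\]
Since the last quantity tends to $0$ as $r\to\infty$ while $\int_{B_{r}(o)}\Delta f\,d\mu$ is nondecreasing in $r$, we get $\Delta f\equiv 0$; substituting back, the nonnegative summands $n\lambda-R$ and $\frac{1}{m}|\nabla f|^{2}$ must both vanish, hence $\nabla f\equiv 0$ and $R=n\lambda$. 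Finally, $\nabla f\equiv 0$ in (\ref{eqprinc}) gives $Ric=\lambda g$, so $M^{n}$ is Einstein.

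For item (2), I would argue by a pointwise rank comparison. Being a gradient, the conformal vector field $\nabla f$ satisfies $\nabla^{2}f=\frac{1}{2}\mathcal{L}_{\nabla f}g=\sigma g$, where $\sigma=\Delta f/n$. If moreover $(M^{n},g)$ is Einstein, $Ric=\frac{R}{n}g$, and plugging both facts into (\ref{eqprinc}) yields
\[
\frac{1}{m}\,df\otimes df=\Big(\tfrac{R}{n}+\sigma-\lambda\Big)g .
\]
At any point where $\nabla f\neq 0$ the left-hand side is a symmetric $2$-tensor of rank exactly $1$, whereas the right-hand side is a scalar multiple of $g$, hence of rank $0$ or $n\ge 2$; this contradiction forces $\nabla f\equiv 0$. (Finiteness of $m$ is essential, since it is precisely what keeps the rank-one term $\frac{1}{m}df\otimes df$ in the equation.)

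The integration by parts and the rank count are routine; the only delicate point is the exhaustion argument in (1), where one must ensure the cutoffs $\varphi_{r}$ are legitimate on a general complete non-compact manifold and that the sign of $\Delta f$ is used correctly to pass from ``$\int_{B_{r}}\Delta f\to 0$'' to ``$\Delta f\equiv 0$''; this is the standard Gaffney/Yau-type device and is the main (mild) obstacle. As an alternative for (1) one may set $u=e^{-f/m}$, so that (\ref{conformal}) together with (\ref{eqntrace}) give $\Delta u=\frac{u}{m}(R-n\lambda)\le 0$, exhibiting $u$ as a positive superharmonic function with $|\nabla u|=\frac{u}{m}|\nabla f|$; however, the computation with $f$ above seems the cleaner route.
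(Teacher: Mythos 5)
Your proof is correct and follows essentially the same route as the paper: item (1) rests on the trace identity $m\,\Delta f=|\nabla f|^{2}+m(n\lambda-R)\ge 0$ together with $|\nabla f|\in L^{1}$, where the paper simply cites a Yau-type vanishing result (Proposition 1 of Caminha--Camargo--Souza) while you reprove it directly with cutoff functions. Item (2) is exactly the paper's argument: substituting $\nabla^{2}f=\sigma g$ and $Ric=\frac{R}{n}g$ into the fundamental equation reduces everything to the observation (the paper's Lemma \ref{lemflat}) that a rank-one tensor $df\otimes df$ can equal a multiple of $g$ only if $\nabla f=0$.
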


\section{Preliminaries}

In this section we shall present some preliminaries which will be useful for the establishment of the desired results.
The first one is a general lemma for a vector field $X\in \frak{X}(M^n)$ on a Riemannian manifold $M^n$.
\begin{lemma}
\label{lemflat}
Let $(M^{n},\,g)$ be a Riemannian manifold and $X\in \frak{X}(M^n)$. Then the following statements hold:
\begin{enumerate}
\item If\, $\big(X^{\flat}\otimes X^{\flat}\big)=\rho g$ for some smooth function $\rho:M\to \Bbb{R},$ then $\rho=|X|^2=0.$ In particular, the unique solution of the equation $df\otimes df=\rho g$ is $f$ constant.
\item If $M^n$ is compact and  $X$ is a conformal vector field, then $\int_{M}|X|^{2}div\,Xd\mu=0.$ In particular, if $X=\nabla f$ is a gradient conformal vector field, then $\int_{M}|\nabla f|^{2}\Delta fd\mu=0.$
\end{enumerate}
\end{lemma}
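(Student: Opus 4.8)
The plan is to prove the two items independently by elementary computations: the first pointwise, the second via the divergence theorem. Nothing in the earlier part of the paper is needed; the lemma is self-contained.

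\textbf{Item (1).} I would argue pointwise. Fix $p\in M$ and assume $X^{\flat}\otimes X^{\flat}=\rho g$ holds there. Taking traces of both sides gives $|X|^{2}=n\rho$ at $p$. Evaluating both sides on the pair $(X,X)$ gives $\langle X,X\rangle^{2}=\rho\,\langle X,X\rangle$, i.e.\ $|X|^{4}=\rho|X|^{2}$. Substituting $\rho=|X|^{2}/n$ into the latter yields $|X|^{4}=|X|^{4}/n$, and since $n\ge 2$ this forces $|X|^{2}=0$, hence also $\rho=0$, at $p$. (Alternatively: at a point where $X_{p}\neq 0$ one may pick a nonzero $Y\perp X_{p}$—possible because $n\ge 2$—and evaluate both sides on $(Y,Y)$ to read off $\rho|Y|^{2}=0$, contradicting $|X|^{2}=n\rho>0$.) As $p$ was arbitrary, $X\equiv 0$ and $\rho\equiv 0$. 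The final assertion is the special case $X=\nabla f$, since $(\nabla f)^{\flat}=df$: we obtain $\nabla f\equiv 0$, that is, $f$ constant.

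\textbf{Item (2).} Write the conformal condition as $\frac{1}{2}\mathcal{L}_{X}g=\rho g$ and take traces to get $\mathrm{div}\,X=n\rho$. Specializing the identity $\langle\nabla_{Y}X,Z\rangle+\langle\nabla_{Z}X,Y\rangle=2\rho\langle Y,Z\rangle$ to $Y=Z=X$ gives $\langle\nabla_{X}X,X\rangle=\rho|X|^{2}$, so $X(|X|^{2})=2\langle\nabla_{X}X,X\rangle=2\rho|X|^{2}$. The key computation is then
\[
\mathrm{div}\big(|X|^{2}X\big)=X(|X|^{2})+|X|^{2}\,\mathrm{div}\,X=2\rho|X|^{2}+n\rho|X|^{2}=\frac{n+2}{n}\,|X|^{2}\,\mathrm{div}\,X .
\]
Since $M$ is compact, integrating over $M$ and applying the divergence theorem makes the left-hand side vanish, hence $\int_{M}|X|^{2}\,\mathrm{div}\,X\,d\mu=0$. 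For a gradient conformal field $X=\nabla f$ one has $\mathrm{div}\,X=\Delta f$, which gives $\int_{M}|\nabla f|^{2}\Delta f\,d\mu=0$.

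\textbf{Main obstacle.} There is no real analytic difficulty here; the two things to watch are: (a) the hypothesis $n\ge 2$, which is exactly what makes the trace argument in (1) conclusive (the statement fails for $n=1$); and (b) correctly carrying the factor $\mathrm{div}\,X=n\rho$ through the computation of $\mathrm{div}(|X|^{2}X)$, so that the resulting constant $\frac{n+2}{n}$ is nonzero and can be cancelled.
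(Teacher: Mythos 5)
Your proposal is correct and follows essentially the same route as the paper: item (1) is the paper's "degenerate rank-one tensor cannot be a nonzero multiple of $g$" observation, which you simply spell out via traces (correctly noting the need for $n\ge 2$), and item (2) is exactly the paper's computation of $div\,(|X|^{2}X)=\frac{n+2}{n}|X|^{2}div\,X$ followed by the divergence theorem.
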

\begin{proof}Since $\big(X^{\flat}\otimes X^{\flat}\big)$ is a degenerate $(0,2)$ tensor the first statement is trivial.
Taking into account that $X$ is a conformal vector field  we have $\frac{1}{2}\mathcal{L}_{X}g=\rho\,g,$ where $\rho=\frac{1}{n}div\,X$. From which we obtain
\begin{equation}
\label{eqnconf2}
|X|^{2}div\,X=n\langle\nabla_{X}X,X\rangle.
\end{equation}
On the other hand, since $div\,(|X|^{2}X)=|X|^{2}div\,X+2\langle\nabla_{X}X,X\rangle$, one has

\begin{equation}
\label{eqnconf2}
div\,(|X|^{2}X)=\frac{n+2}{n}|X|^{2}div\,X,
\end{equation}which allows us to complete the proof of the lemma.
\end{proof}

The following formulae from \cite{Petwy} will be useful: on a Riemannian manifold $(M^n,\,g)$ we have
\begin{equation}
\label{eqbochner}
div\,(\mathcal{L}_{X}g)(X)=\frac{1}{2}\Delta|X|^{2}-|\nabla X|^{2}+Ric\,(X,X)+D_{X}div\,X,
\end{equation}

\begin{equation}
div\,(\mathcal{L}_{\nabla f}g)(Z)=2Ric\,(Z,\nabla f)+2D_{Z}div\,\nabla f,
\end{equation}
or on $(1,1)$-tensorial notation
\begin{equation}
\label{eqbochner1}
div\,\nabla^{2} f=Ric\,(\nabla f)+\nabla\Delta f
\end{equation}
and
\begin{equation}
\label{bochnerform}
\frac{1}{2}\Delta\,|\nabla f|^{2}=|\nabla ^2 f|^{2}+D_{\nabla f}div\nabla f+Ric(\nabla f,\nabla f).
\end{equation}

Taking into account that $div (\lambda I)(X)=\langle\nabla\lambda,X\rangle$, where $\lambda$ is a smooth function on $M^{n}$ and $X\in\mathfrak{X}(M)$,\,  equation (\ref{eqbochner}) allows us to deduce the following lemma.

\begin{lemma}
\label{lem1}Let $(M^{n},\,g,\,\nabla f,\,\lambda)$ be a generalized $m$-quasi-Einstein metric. Then we have
\begin{enumerate}
\item
  $\frac{1}{2}\Delta|\nabla f|^{2}=|\nabla^{2}f|^{2}-Ric(\nabla f, \nabla f)+\frac{2}{m}|\nabla f|^{2}\Delta f-(n-2)\langle\nabla\lambda,\nabla f\rangle.$
 \item\label{gradr} ${\frac{1}{2}\nabla R=\frac{m-1}{m}Ric (\nabla f)+\frac{1}{m}(R-(n-1)\lambda)\nabla f+(n-1)\nabla \lambda}.$
\item
\label{gradHamil}
${\nabla(R+|\nabla f|^{2}-2(n-1)\lambda)=2\lambda\nabla f+\frac{2}{m}\{\nabla_{\nabla f}\nabla f+(|\nabla f|^{2}-\Delta f)\nabla f\}}.$
  \end{enumerate}
\end{lemma}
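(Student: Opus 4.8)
The plan is to establish the three identities successively, in each case by computing a divergence of the structural equation (\ref{eqprinc}) and feeding the result into the classical formulae (\ref{eqbochner}), (\ref{eqbochner1}) and (\ref{bochnerform}), together with the trace relations (\ref{eqntrace}) and (\ref{eqntrace2}). Throughout I will use the contracted second Bianchi identity $div\,Ric=\frac{1}{2}\nabla R$, the fact that $\nabla|\nabla f|^{2}=2\nabla_{\nabla f}\nabla f$ (symmetry of $\nabla^{2}f$), and the elementary computation $div(df\otimes df)=\Delta f\,\nabla f+\nabla_{\nabla f}\nabla f$, which comes from $\nabla^{i}(f_{i}f_{j})=(\Delta f)f_{j}+f^{i}f_{ij}$.

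For part (1) I would apply (\ref{eqbochner}) with $X=\nabla f$, so that it reads $div(\mathcal{L}_{\nabla f}g)(\nabla f)=\frac{1}{2}\Delta|\nabla f|^{2}-|\nabla^{2}f|^{2}+Ric(\nabla f,\nabla f)+\langle\nabla\Delta f,\nabla f\rangle$. On the other hand, since $\mathcal{L}_{\nabla f}g=2\nabla^{2}f=2\lambda g-2Ric+\frac{2}{m}\,df\otimes df$ by (\ref{eqprinc}), taking the divergence gives $div(\mathcal{L}_{\nabla f}g)=2\nabla\lambda-\nabla R+\frac{2}{m}(\Delta f\,\nabla f+\nabla_{\nabla f}\nabla f)$; evaluating at $\nabla f$ and using $\langle\nabla_{\nabla f}\nabla f,\nabla f\rangle=\frac{1}{2}\langle\nabla|\nabla f|^{2},\nabla f\rangle$ produces a second expression for the same quantity. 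Equating the two and then eliminating the combination $\langle\nabla\Delta f,\nabla f\rangle+\langle\nabla R,\nabla f\rangle$ together with the term $\frac{1}{m}\langle\nabla|\nabla f|^{2},\nabla f\rangle$ by means of (\ref{eqntrace2}) yields identity (1) after collecting the coefficient of $\langle\nabla\lambda,\nabla f\rangle$, which becomes $2-n$.

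For part (2) I would write (\ref{eqprinc}) in $(1,1)$-tensor form as $Ric=\lambda I-\nabla^{2}f+\frac{1}{m}\,df\otimes df$ and take its divergence; using $div\,Ric=\frac{1}{2}\nabla R$, formula (\ref{eqbochner1}) $div\,\nabla^{2}f=Ric(\nabla f)+\nabla\Delta f$, and the divergence of $df\otimes df$ above, one obtains an expression for $\frac{1}{2}\nabla R$ that still involves $\nabla\Delta f$ and $\nabla_{\nabla f}\nabla f$. Differentiating the trace relation (\ref{eqntrace}) gives $\nabla\Delta f=n\nabla\lambda-\nabla R+\frac{2}{m}\nabla_{\nabla f}\nabla f$, while the vector form of (\ref{eqprinc1}), namely $\nabla_{\nabla f}\nabla f=\lambda\nabla f-Ric(\nabla f)+\frac{1}{m}|\nabla f|^{2}\nabla f$, disposes of the remaining gradient term. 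Substituting both, the coefficient of $Ric(\nabla f)$ collapses to $\frac{m-1}{m}$ and the scalar multiplying $\nabla f$ simplifies, via (\ref{eqntrace}), to $\lambda+\frac{1}{m}|\nabla f|^{2}-\Delta f=R-(n-1)\lambda$, which is exactly (2). Finally, for (3) I would substitute the same vector form $Ric(\nabla f)=\lambda\nabla f-\nabla_{\nabla f}\nabla f+\frac{1}{m}|\nabla f|^{2}\nabla f$ back into (2), use (\ref{eqntrace}) once more to rewrite $R-(n-1)\lambda$, and add $\nabla|\nabla f|^{2}=2\nabla_{\nabla f}\nabla f$ to both sides; the coefficient of $\nabla_{\nabla f}\nabla f$ then turns into $-\frac{2(m-1)}{m}+2=\frac{2}{m}$, which gives (3).

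The arguments are all elementary manipulations; the only delicate point is the bookkeeping of signs and of the several terms of the form $\langle\nabla(\cdot),\nabla f\rangle$ that must cancel. In particular, the key computational fact that makes everything reduce cleanly to the trace identities is the divergence formula $div(df\otimes df)=\Delta f\,\nabla f+\nabla_{\nabla f}\nabla f$, so I would record and verify that at the outset.
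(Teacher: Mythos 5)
Your proposal is correct and follows essentially the same route as the paper: take the divergence of the structural equation (using the contracted Bianchi identity, $div\,\nabla^{2}f=Ric(\nabla f)+\nabla\Delta f$ and $div(df\otimes df)=\Delta f\,\nabla f+\nabla_{\nabla f}\nabla f$), contract with $\nabla f$ and combine with Bochner's formula and (\ref{eqntrace2}) for part (1), then eliminate $\nabla\Delta f$ and $\nabla_{\nabla f}\nabla f$ via the differentiated trace identity and the fundamental equation evaluated at $\nabla f$ for parts (2) and (3). The only cosmetic difference is that you package part (1) through formula (\ref{eqbochner}) rather than through (\ref{bochnerform}) directly, which amounts to the same computation; the coefficient checks ($2-n$, $\tfrac{m-1}{m}$, $\tfrac{2}{m}$) all come out as you indicate.
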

\begin{proof}Since $Ric+\nabla^{2}f-\frac{1}{m}df \otimes df=\lambda g$ we use the second contracted Bianchi identity
\begin{equation}
\label{2bid}
\nabla R=2div\,Ric
\end{equation}as well as the next identity
\begin{equation}
\label{divdfodf}div\,(df\otimes df)=\Delta\,f\,\nabla f+\nabla_{\nabla f}\nabla f
\end{equation} and (\ref{eqbochner1}) to deduce

\begin{equation}
\label{eqngradrl1}
\nabla R+2Ric\,(\nabla f)+2\nabla\Delta f-\frac{2}{m}\Delta\,f\,\nabla f-\frac{2}{m}\nabla_{\nabla f}\nabla f=2\nabla\lambda.
\end{equation}
In particular one deduces
{\small\begin{equation}
\langle\nabla R,\nabla f\rangle+2Ric(\nabla f,\nabla f)+2\langle \nabla\Delta f,\nabla f\rangle-\frac{2}{m}\Delta\,f|\nabla f|^2-\frac{2}{m}\langle \nabla_{\nabla f}\nabla f,\nabla f\rangle=2\langle\nabla\lambda,\nabla f\rangle.
\end{equation}}
Next using (\ref{eqntrace2}) and (\ref{bochnerform}) jointly with the last identity we conclude

\begin{equation}
\label{eqitem1}
\frac{1}{2}\Delta|\nabla f|^{2}=|\nabla^{2} f|^{2}-Ric(\nabla f,\nabla f) +\frac{2}{m}|\nabla f|^{2}div\, \nabla f-(n-2)\langle\nabla\lambda,\nabla f\rangle,
\end{equation}
which finishes the first statement of the lemma.
On the other hand, substituting $\Delta f=-R+\lambda n+\frac{1}{m}|\nabla f|^{2}$ and remembering that $\nabla |\nabla f|^2=2\nabla_{\nabla f}{\nabla f}$ we use once more (\ref{eqngradrl1}) to write
\begin{eqnarray*}
\frac{1}{2}\nabla R&=&-Ric (\nabla f)-\nabla (-R+\lambda n+\frac{1}{m}|\nabla f|^{2})+\frac{1}{m}\Delta f \nabla f+\frac{1}{m}\nabla_{\nabla f}\nabla f+\nabla \lambda\\
&=&-Ric (\nabla f)+\nabla R-\frac{1}{m}\nabla_{\nabla f}\nabla f+\frac{1}{m}\Delta f \nabla f-(n-1)\nabla \lambda.
\end{eqnarray*}Of which we deduce
\begin{equation}
\label{equatl1}
\frac{1}{2}\nabla R=Ric(\nabla f)-\frac{1}{m}\Delta f \nabla f+\frac{1}{m}\nabla_{\nabla f}\nabla f+(n-1)\nabla \lambda.
\end{equation}

We now use the fundamental equation to write
\begin{equation}
\label{equatl2}
\nabla_{\nabla f}\nabla f=\lambda\nabla f+\frac{1}{m}|\nabla f|^{2}\nabla f-Ric(\nabla f).
\end{equation}
In particular, combining (\ref{equatl1}) and  (\ref{equatl2}) we obtain

\begin{eqnarray*}
\frac{1}{2}\nabla R &=& \frac{m-1}{m}Ric(\nabla f)+\frac{1}{m}\Big(\lambda+\frac{1}{m}|\nabla f|^{2}-\Delta f\Big) \nabla f+(n-1)\nabla \lambda\\
&=&\frac{m-1}{m}Ric(\nabla f)+\frac{1}{m}(R-(n-1)\lambda)\nabla f+(n-1)\nabla \lambda,
\end{eqnarray*}which gives the second assertion.

Finally, noticing that $\frac{1}{2}\nabla R+\frac{1}{2}\nabla|\nabla f|^{2}=\frac{1}{2}\nabla R+\nabla_{\nabla f}{\nabla f}$ we use the last equation and (\ref{equatl2}) to write
\begin{eqnarray*}
\frac{1}{2}\nabla R+\frac{1}{2}\nabla|\nabla f|^{2}
&=&\frac{m-1}{m}Ric(\nabla f)+\frac{1}{m}(R-(n-1)\lambda)\nabla f+(n-1)\nabla\lambda\\&+&\lambda\nabla f+\frac{1}{m}|\nabla f|^{2}\nabla f-Ric(\nabla f)\\
\end{eqnarray*}
Thus, using equation (\ref{eqntrace}) once more, we achieve
{\small\begin{eqnarray*}
\nabla(R+|\nabla f|^{2}-2(n-1)\lambda)-2\lambda\nabla f&=&\frac{2}{m}\{(|\nabla f|^{2}+R-(n-1)\lambda)\nabla f-Ric(\nabla f)\}\\&=&\frac{2}{m}\{(|\nabla f|^{2}+R-n\lambda+\lambda)\nabla f-Ric(\nabla f)\}
\\&=&\frac{2}{m}\{(|\nabla f|^{2}+\frac{1}{m}|\nabla f|^{2}-\Delta f+\lambda)\nabla f-Ric(\nabla f)\}\\
&=&\frac{2}{m}\{\nabla_{\nabla f}\nabla f+(|\nabla f|^{2}-\Delta f)\nabla f\},\\
\end{eqnarray*}}which concludes the proof of the lemma.
\end{proof}

It is convenient to point out that  for $m=\infty$ and $\lambda$ constant, assertion (\ref{gradHamil}) of the last lemma is a generalization of the classical Hamilton equation \cite{hamilton} for a gradient Ricci soliton: $\mathrm{R+|\nabla f|^{2}-2\lambda f=C}$, where $C$ is constant, as well as for the following relation: $\mathrm{\nabla(R+|\nabla f|^{2}-2(n-1)\lambda)=2\lambda\nabla f}$, that was proved in \cite{br} for an almost Ricci soliton.
Choosing $Z\in\mathfrak{X}(M),$ we deduce from the first assertion of Lemma \ref{lem1} the following identity
\begin{equation}
\label{equatle}
\frac{1}{2}\langle\nabla R,Z\rangle=\frac{m-1}{m}Ric(\nabla f,Z)+\frac{1}{m}(R-(n-1)\lambda)\langle\nabla f,Z\rangle+(n-1)\langle\nabla\lambda,Z\rangle.
\end{equation}
We now present the main result of this section. Taking in account that $u=e^{-\frac{f}{m}}$ we have the following lemma.
\begin{lemma}
\label{lemM}Let $(M^{n},\,g,\,\nabla f,\,\lambda),\,n\ge3,$ be a generalized $m$-quasi-Einstein metric. If, in addition $M^n$ is Einstein, then we have
\begin{equation}\nabla^{2} u=\big(-\frac{R}{n(n-1)}u+\frac{c}{m}\big)g,
\end{equation}where $c$ is constant.
\end{lemma}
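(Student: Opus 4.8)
The plan is to exploit the conformal relation (\ref{conformal}) together with the Einstein hypothesis in the structural equation (\ref{eqntrfr}). Since $M^n$ is Einstein we have $Ric=\frac{R}{n}g$, so the traceless Ricci tensor vanishes and (\ref{eqntrfr}) reduces to
\begin{equation*}
\nabla^{2}f-\frac{\Delta f}{n}g=\frac{1}{m}\Big(df\otimes df-\frac{1}{n}|\nabla f|^2 g\Big).
\end{equation*}
Rearranging, $\nabla^{2}f-\frac{1}{m}df\otimes df=\frac{1}{n}\big(\Delta f-\frac{1}{m}|\nabla f|^2\big)g$, i.e. the Bakry-Emery part is pure trace. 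Now substitute (\ref{conformal}): the left side equals $-\frac{m}{u}\nabla^{2}u$, so $\nabla^{2}u=\varphi\,g$ for the smooth function $\varphi=-\frac{1}{mn}\big(\Delta f-\frac{1}{m}|\nabla f|^2\big)u$. Equivalently one checks directly that $\nabla u$ is a conformal (gradient) vector field, which is exactly the equivalence already noted in the text after (\ref{conformal}). Thus the first step is simply to record that Einstein $\Rightarrow \nabla^2 u=\varphi g$.

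The second step is to pin down $\varphi$. Tracing $\nabla^2 u=\varphi g$ gives $\Delta u=n\varphi$. Apply the identity (\ref{eqbochner1}), $\mathrm{div}\,\nabla^2 u=Ric(\nabla u)+\nabla\Delta u$; the left side is $\mathrm{div}(\varphi g)=\nabla\varphi$, and using $Ric=\frac{R}{n}g$ on the right we get $\nabla\varphi=\frac{R}{n}\nabla u+n\nabla\varphi$, hence $(n-1)\nabla\varphi=-\frac{R}{n}\nabla u$, so
\begin{equation*}
\nabla\varphi=-\frac{R}{n(n-1)}\nabla u.
\end{equation*}
Here $R$ is constant because $M^n$ is Einstein with $n\ge 3$ (Schur's lemma). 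Therefore $\nabla\big(\varphi+\frac{R}{n(n-1)}u\big)=0$, which gives $\varphi=-\frac{R}{n(n-1)}u+\text{const}$. Writing the constant as $c/m$, we obtain $\nabla^2 u=\big(-\frac{R}{n(n-1)}u+\frac{c}{m}\big)g$, as claimed.

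The only genuine subtlety — the place I would be most careful — is the passage from ``the Bakry-Emery tensor is pure trace'' to ``$\nabla^2 u=\varphi g$'' with a \emph{globally} well-defined smooth $\varphi$: a priori $u=e^{-f/m}$ is positive and smooth everywhere, so dividing by $u$ in (\ref{conformal}) is harmless, and $\varphi$ inherits smoothness from that of $f$ and $|\nabla f|^2$. After that, everything is the standard computation for a gradient conformal field on an Einstein manifold, and the use of Schur's lemma to conclude $R$ is constant (needing $n\ge 3$, which is in the hypothesis) is the point that makes the final ODE for $\varphi$ integrate to the stated form. I would also remark that this lemma sets up the classification in Theorem \ref{thmsph}: the equation $\nabla^2 u=(au+b)g$ is precisely Obata/Tashiro-type, whose solutions force $M^n$ to be a space form.
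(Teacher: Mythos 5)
Your proof is correct and follows essentially the same route as the paper: both establish $\nabla^2 u=\varphi g$ from the Einstein condition via (\ref{conformal}), then apply the divergence identity (\ref{eqbochner1}) together with Schur's lemma to integrate the resulting equation for the conformal factor. The only (cosmetic) difference is that you solve directly for $\varphi$, whereas the paper first determines $\lambda u$ up to a constant and substitutes back; your version is slightly cleaner but mathematically identical.
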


\begin{proof}Since $M^n$ is Einstein and $n\ge3$ we have $Ric=\frac{R}{n}g$ with $R$ constant. In particular, it follows from (\ref{conformal}) that
\begin{equation}
\label{eqnd2uM}
\nabla^2u=\frac{1}{m}\big( \frac{R}{n}u-\lambda u\big)g.
\end{equation}Whence, using (\ref{eqbochner1}) we deduce
\begin{equation}
\label{eqdivd2u}
Ric\,(\nabla u)+\nabla\Delta u=\frac{1}{m}\nabla\big( \frac{R}{n}u-\lambda u\big).
\end{equation}Therefore we infer

\begin{equation}
\label{eqdivd2u1}
\frac{R}{n}\nabla u+\nabla\Delta u=\frac{R}{nm}\nabla u-\frac{1}{m}\nabla( \lambda u).
\end{equation}

On the other hand, in accordance with (\ref{eqprinc}) and (\ref{conformal}) we deduce

\begin{equation}
\label{eqnlapu1}
\Delta u=\frac{R}{m}u-\frac{n}{m}\lambda u.
\end{equation}

We now  compare (\ref{eqdivd2u1}) and (\ref{eqnlapu1}) to obtain
\begin{equation}
\label{eqngradlambdau}
\nabla (\lambda u)=R\frac{(m+n-1)}{n(n-1)}\nabla u.
\end{equation}Therefore we deduce $\lambda u=R\frac{(m+n-1)}{n(n-1)}u-c,$  where $c$ is constant. Next we use this value of $\lambda u$ in (\ref{eqnd2uM}) to complete the proof of the lemma.

\end{proof}

\section{Proofs of the main results}

\subsection{Proof of Theorem \ref{thmsph}}
\begin{proof}
First of all, we notice that (\ref{conformal}) gives that $M^n$ is Einstein if and only if  $\nabla u$ is a conformal vector field. Since $f$ is not constant and we are supposing that $\nabla u$ is a non trivial conformal vector field, which enables us to write $\frac{1}{2}\mathcal{L}_{\nabla u}g=\nabla^2u=\frac{\Delta\,u}{n}g$, we deduce that $M^n$ is Einstein. Moreover, using (\ref{eqprinc}) and (\ref{conformal}) we deduce
\begin{equation*}
Ric\,=\big(\lambda+m\frac{\Delta\, u}{nu}\big)g.
\end{equation*}Since $n\ge3,$ we have from Schur's Lemma that $R=n\lambda+m\frac{\Delta\,u}{u}$ is constant.

On the other hand, from Lemma \ref{lemM}  we have $$\nabla^{2} u=\big(-\frac{R}{n(n-1)}u+\frac{c}{m}\big)g$$ where $c$ is constant. Therefore, we are in position to apply Theorem $2$ due to Tashiro \cite{tash} to deduce that $M^n$ is a space form.

If $R$ is positive,  we may assume that $M^{n}$ is isometric to a unit standard sphere $\Bbb{S}^{n}.$ Since $R=n(n-1)$ we deduce from Lemma \ref{lemM} that  $\Delta u+nu=kn,$ where $k$ is constant. Then, up to constant, $u$ is a first eigenfunction of the Laplacian of $\Bbb{S}^{n}.$ Therefore, we have $u=h_{v}(x)=\langle x,v\rangle+k$, where $v$ is a linear combination of unit vectors  in $\Bbb{R}^{n+1}$. Hence, $f$ is, up to constant, given by (\ref{potf}).

Next, if $R=0$ we have from (\ref{eqngradlambdau}) that $c$ is not zero. In this case $M^{n}$ is isometric to a Euclidean space $\Bbb{R}^{n}.$ Using once more  Lemma \ref{lemM} we obtain $\Delta u =k,$ where $k$ is constant. Since $u$ must be positive, up change of coordinates, we deduce that $u(x)=|x|^2+\tau$, with $\tau>0.$

Finally, if $R<0,$ it follows from Theorem $2$ of \cite{tash} that $M^n$ is isometric to a hyperbolic space, since we have only one critical point for $u.$ Now let us suppose that $M^n$ is isometric to $\Bbb{H}^{n}(-1).$ We can use the same argument due to Tashiro \cite{tash} to conclude that, up to constant, $u=h_{v}+\tau,\,\tau>-1$, with $v\in \Bbb{H}^{n}(-1)$, since in this case $\langle x,v\rangle_{0}=-\cosh \eta(x,v)$, where $\eta(x,v)$ is the time-like angle between $x$ and $v,$ which is exactly the geodesic distance between them. Therefore, we complete the proof of the theorem.

\end{proof}

\subsection{Proof of Corollary \ref{corthmsph}}
\begin{proof} On integrating Bochner's formula we obtain
\begin{equation}
\label{eqD2u}
\int_{M}|\nabla^{2}u-\frac{\Delta u}{n}g|^{2}d\mu=\frac{n-1}{n}\int_{M}(\Delta u)^{2}d\mu-\int_{M}Ric(\nabla u,\nabla u)d\mu.
\end{equation}

In particular, from our assumption we conclude that
\begin{equation}\label{eqintd2u}
\int_{M}|\nabla^{2}u-\frac{\Delta u}{n}g|^{2}d\mu=0.
\end{equation}

Whence, we deduce that $\nabla u$  is a non trivial conformal vector field. Then, for $n\ge3,$ we can apply Theorem \ref{thmsph} to conclude the proof of the corollary.
\end{proof}

\subsection{Proof of Theorem \ref{thm1}}
\begin{proof}First we integrate the identity derived in Lemma \ref{lem1} and Stokes' formula to infer

\begin{equation}\label{eqnthm11}
\int_{M}|\nabla^2 f|^{2}d\mu=\int_{M}Ric\,(\nabla f,\nabla f)d\mu-\frac{2}{m}\int_{M}|\nabla f|^{2}\Delta f d\mu+(n-2)\int_{M}\langle\nabla\lambda,\nabla f\rangle d\mu.
\end{equation}
On the other hand, since we are assuming that the right hand of above identity is less than or equal to zero, we obtain $\nabla^{2}f=0$. Therefore, $\Delta\,f=0,$ which implies by Hopf's theorem that $f$ is constant and we finish the establishment  of the first assertion.

Proceeding one notices that for $m=\infty$, using equation (\ref{eqntrace}) the result follows. On the other hand, for $m$ finite, considering
once more the  auxiliary function $u=e^{-\frac{f}{m}},$ as we already saw
$\Delta u=\frac{u}{m}(R-\lambda n).$ Since $M^{n}$ is compact, $u>0$  and $(\mathrm{R-n\lambda})\geq0\,(\leq0)$, we can use once more Hopf's theorem to deduce that $u$ is constant and so is $f$. From which we complete the proof of the theorem.
\end{proof}

\subsection{Proof of Theorem \ref{thm2}}
\begin{proof}Taking into account identity (\ref{eqntrace}) we obtain
\begin{equation}
\label{eqproteo2}
mdiv\, \nabla f=|\nabla f|^{2}+m(n\lambda-R).
\end{equation}
By one hand $mdiv\, \nabla f\geq0$, since $(n\lambda-R)\geq0$. On the other hand, if  $|\nabla f|\in L^{1}(M^n)$, we may invoke Proposition $1$ in \cite{caminha}, which is a generalization of  a result due to Yau \cite{yau} for subharmonic functions, to derive that $div\, \nabla f=0.$ Next, we may use equation (\ref{eqproteo2}) to conclude that $\nabla f\equiv0,$ as well as $n\lambda=R.$ Therefore, $f$ is constant and $M^n$ is an Einstein manifold, which gives the first assertion.
Now let us suppose that $(M^n,\,g)$ is an Einstein manifold, in particular a surface has this propriety. If $\nabla f$ is a conformal vector field with conformal factor $\rho,$ here we can have a Killing vector field, then $\nabla^{2}f=\rho g$, where $\rho=\frac{1}{n}\rm{div}\,\nabla f$. Since $Ric\,=\frac{R}{n}g$ we deduce from equation (\ref{eqntrfr}) that
\begin{equation}
\label{eqthm6}
\frac{1}{m}(df\otimes df)=|\nabla f|^2 g.
\end{equation}
But, using that $m$ is finite, we can apply Lemma \ref{lemflat} to conclude that $\nabla f\equiv 0,$ which completes the proof of the theorem.
\end{proof}
\section{Integral formulae for generalized $m$-quasi-Einstein metrics}
In this section we shall introduce some integral formulae for a compact generalized $m$-quasi-Einstein metric. Before, we present the next result which is a natural extension of one obtained for an almost Ricci soliton in \cite{br}, as well as a similar one in \cite{prrs}.

\begin{lemma}
\label{lem4}
Let $\big(M^n,\,g,\,\nabla f,\,\lambda\big)$ be a generalized $m$-quasi-Einstein metric. Then we have
\begin{eqnarray*}
\frac{1}{2}\Delta R&=&-|\nabla^{2}f-\frac{\Delta f}{n}g|^{2}-\Big\{\frac{m+n}{nm}\Big\}(\Delta f)^{2}-\frac{n}{2}\langle\nabla f,\nabla\lambda\rangle
+\langle\nabla f,\nabla R\rangle\\&&+\Big\{\frac{m-2}{2m}\Big\}\langle\nabla f,\nabla \Delta f\rangle+\frac{1}{m}div\,\big(\nabla_{\nabla f}\nabla f\big)+(n-1)\Delta\lambda+\lambda\Delta f.\\
\end{eqnarray*}
\end{lemma}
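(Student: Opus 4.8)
The plan is to compute $\frac{1}{2}\Delta R$ by applying the divergence operator to the gradient formula for $R$ already established. The cleanest starting point is identity \eqref{eqngradrl1} from the proof of Lemma \ref{lem1}, namely
\begin{equation*}
\nabla R = -2Ric(\nabla f) - 2\nabla\Delta f + \frac{2}{m}\Delta f\,\nabla f + \frac{2}{m}\nabla_{\nabla f}\nabla f + 2\nabla\lambda,
\end{equation*}
or alternatively the more symmetric form \eqref{equatl1}. First I would take the divergence of both sides. This requires four auxiliary computations: $div\,Ric(\nabla f) = \frac{1}{2}\langle\nabla R,\nabla f\rangle + Ric\cdot\nabla^2 f$ (using the contracted Bianchi identity $div\,Ric = \frac{1}{2}\nabla R$, where $Ric\cdot\nabla^2 f$ denotes the full contraction of the two symmetric tensors); $div\,\nabla\Delta f = \Delta(\Delta f)$; $div(\Delta f\,\nabla f) = \langle\nabla\Delta f,\nabla f\rangle + (\Delta f)^2$; and $div(\nabla_{\nabla f}\nabla f)$, which I would keep as the term $\frac{1}{m}div(\nabla_{\nabla f}\nabla f)$ appearing in the statement. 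The term $div\,\nabla\lambda = \Delta\lambda$ is immediate.

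The key algebraic move is to eliminate the contraction $Ric\cdot\nabla^2 f$ and the term $\Delta(\Delta f)$ in favor of the quantities appearing in the target identity. For the first, I would use the fundamental equation \eqref{eqprinc} in the form $Ric = \lambda g - \nabla^2 f + \frac{1}{m}df\otimes df$ to write
\begin{equation*}
Ric\cdot\nabla^2 f = \lambda\Delta f - |\nabla^2 f|^2 + \frac{1}{m}\nabla^2 f(\nabla f,\nabla f) = \lambda\Delta f - |\nabla^2 f|^2 + \frac{1}{2m}\langle\nabla f,\nabla|\nabla f|^2\rangle.
\end{equation*}
For $\Delta(\Delta f)$, I would differentiate the trace equation \eqref{eqntrace}: since $\Delta f = n\lambda - R + \frac{1}{m}|\nabla f|^2$, applying $\Delta$ gives $\Delta(\Delta f) = n\Delta\lambda - \Delta R + \frac{1}{m}\Delta|\nabla f|^2$, and here I would substitute the Bochner-type formula for $\frac{1}{2}\Delta|\nabla f|^2$ from item (1) of Lemma \ref{lem1}. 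Then the scalar-curvature term $\Delta R$ from this substitution combines with the $\frac{1}{2}\Delta R$ on the left-hand side. Finally I would convert $|\nabla^2 f|^2$ into $|\nabla^2 f - \frac{\Delta f}{n}g|^2 + \frac{(\Delta f)^2}{n}$ to match the stated form, and collect all terms; the coefficients $\frac{m+n}{nm}$, $\frac{m-2}{2m}$, $\frac{n}{2}$ should fall out of careful bookkeeping of the $\frac{1}{m}$-weighted contributions and the $Ric(\nabla f,\nabla f)$ cancellations.

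The main obstacle I anticipate is the bookkeeping: several terms — $\langle\nabla f,\nabla R\rangle$, $\langle\nabla f,\nabla\Delta f\rangle$, $\langle\nabla f,\nabla\lambda\rangle$, and $Ric(\nabla f,\nabla f)$ — enter from multiple sources (from $div\,Ric(\nabla f)$, from $\Delta(\Delta f)$ via Lemma \ref{lem1}(1), and from expanding $Ric\cdot\nabla^2 f$), and they must be shown to recombine with exactly the coefficients claimed, with the $Ric(\nabla f,\nabla f)$ terms canceling entirely. A useful consistency check along the way is to specialize to $m=\infty$ with $\lambda$ constant (gradient Ricci soliton) and to the quasi-Einstein case ($\lambda$ constant, $m$ finite), comparing against the known formulae in \cite{csw} and \cite{br}. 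Once the identity is assembled it should then be integrated against $d\mu$ in the next section to produce the advertised integral formulae, but that is beyond the present lemma.
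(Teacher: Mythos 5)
Your proposal is correct and is essentially the paper's own argument: the paper likewise computes $\tfrac{1}{2}\Delta R$ by taking the divergence of a gradient identity for $R$ derived from \eqref{eqprinc} (it starts from item (3) of Lemma \ref{lem1}, which is just \eqref{eqngradrl1} after the substitutions already made in that lemma's proof) and then closes the computation with Bochner's formula, the traced equation \eqref{eqntrace}, and the contracted Bianchi identity. I checked that your bookkeeping does close up: after eliminating $\Delta(\Delta f)$ and $Ric\cdot\nabla^{2}f$ as you describe, the leftover cross terms $\tfrac{1}{2}\langle\nabla R,\nabla f\rangle+\tfrac{1}{m}\langle\nabla_{\nabla f}\nabla f,\nabla f\rangle-\tfrac{1}{m}\langle\nabla\Delta f,\nabla f\rangle$ recombine into $\langle\nabla R,\nabla f\rangle-\tfrac{n}{2}\langle\nabla\lambda,\nabla f\rangle+\tfrac{m-2}{2m}\langle\nabla\Delta f,\nabla f\rangle$ exactly via \eqref{eqntrace2}, the only minor care point being that you must retain half of $\tfrac{1}{m}\Delta|\nabla f|^{2}$ as $\tfrac{1}{m}div(\nabla_{\nabla f}\nabla f)$ (rather than substituting Lemma \ref{lem1}(1) into all of it) to land on the stated $+\tfrac{1}{m}div(\nabla_{\nabla f}\nabla f)$.
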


\begin{proof}Initially by using assertion (\ref{gradHamil}) of Lemma \ref{lem1} to  compute the divergence of $\nabla R$ we obtain
\begin{eqnarray*}
\Delta R+\Delta|\nabla f|^{2}-2(n-1)\Delta\lambda&=&2div\,(\lambda\nabla f)+\frac{2}{m}\Big\{\langle\nabla(|\nabla f|^{2}-\Delta f),\nabla f\rangle\\&+&(|\nabla f|^{2}-\Delta f)\Delta f+div\,(\nabla_{\nabla f}\nabla f)\Big\}.
\end{eqnarray*}
We now use $|\nabla^{2}f-\frac{\Delta f}{n}g|^{2}=|\nabla^{2}f|^{2}-\frac{1}{n}(\Delta f)^{2}$ with Bochner's formula to write
{\small\begin{eqnarray*}
\frac{1}{2}\Delta R&=&-Ric\,(\nabla f,\nabla f)-|\nabla^{2}f-\frac{\Delta f}{n}g|^{2}-\frac{1}{n}(\Delta f)^{2}-\langle\nabla\Delta f,\nabla f\rangle\\&+&(n-1)\Delta\lambda+div\,(\lambda\nabla f)+\frac{2}{m}\langle\nabla_{\nabla f}\nabla f,\nabla f\rangle\\&+&\frac{1}{m}\Big\{(|\nabla f|^{2}-\Delta f)\Delta f-\langle\nabla\Delta f,\nabla f\rangle+div\,(\nabla_{\nabla f}\nabla f)\Big\}.
\end{eqnarray*}}
Next, we invoke equation (\ref{eqntrace}) to write $\langle\nabla\Delta f,\nabla f\rangle= \langle\nabla\big(n\lambda +\frac{1}{m}|\nabla f|^2-R\big),\nabla f\rangle$. Then the last relation becomes
\begin{eqnarray*}
\frac{1}{2}\Delta R&=&-Ric\,(\nabla f,\nabla f)-|\nabla^{2}f-\frac{\Delta f}{n}g|^{2}-\frac{m+n}{nm}(\Delta f)^{2} +(n-1)\Delta\lambda\\&-&\langle\nabla(\frac{1}{m}|\nabla f|^{2}-R+\lambda n),\nabla f\rangle+\frac{2}{m}\langle\nabla_{\nabla f}\nabla f,\nabla f\rangle+div\,(\lambda\nabla f)\\&+&\frac{1}{m}\Big\{|\nabla f|^{2}\Delta f-\langle\nabla\Delta f,\nabla f\rangle+div\,(\nabla_{\nabla f}\nabla f)\Big\}\\
&=&-\big(Ric\,(\nabla f,\nabla f)+(n-1)\langle\nabla\lambda,\nabla f\rangle\big)-|\nabla^{2}f-\frac{\Delta f}{n}g|^{2}-\frac{m+n}{nm}(\Delta f)^{2}\\&+&(n-1)\Delta\lambda+\lambda\Delta f+\langle\nabla R,\nabla f\rangle\\&+& \frac{1}{m}\Big\{|\nabla f|^{2}\Delta f-\langle\nabla\Delta f,\nabla f\rangle+div\,(\nabla_{\nabla f}\nabla f)\Big\}.
\end{eqnarray*}
On the other hand, using (\ref{equatle}) we can write
\begin{equation}
\label{eqric1}
 Ric(\nabla f,\nabla f)+(n-1)\langle\nabla\lambda,\nabla f\rangle=\frac{1}{2}\langle\nabla R,\nabla f\rangle+\frac{1}{m}Ric(\nabla f,\nabla f)
-\frac{1}{m}(R-(n-1)\lambda)|\nabla f|^2.
\end{equation}

Therefore, we compare the last two equations to obtain
\begin{eqnarray*}
\frac{1}{2}\Delta R
&=&\frac{1}{2}\langle\nabla R,\nabla f\rangle-|\nabla^{2}f-\frac{\Delta f}{n}g|^{2}-\frac{m+n}{nm}(\Delta f)^{2}+(n-1)\Delta\lambda+\lambda\Delta f\\&+& \frac{1}{m}\Big\{-Ric(\nabla f,\nabla f)+\big(\Delta f+R-n\lambda \big)|\nabla f|^{2}+\lambda|\nabla f|^{2}\Big\}\\
\\&+& \frac{1}{m}\Big\{-\langle\nabla\Delta f,\nabla f\rangle+div\,(\nabla_{\nabla f}\nabla f)\Big\}\\
&=&\frac{1}{2}\langle\nabla R,\nabla f\rangle-|\nabla^{2}f-\frac{\Delta f}{n}g|^{2}-\frac{m+n}{nm}(\Delta f)^{2}+(n-1)\Delta\lambda+\lambda\Delta f\\&+& \frac{1}{m}\Big\{\langle\nabla_{\nabla f}\nabla f,\nabla f\rangle-\langle\nabla\Delta f,\nabla f\rangle+div\,(\nabla_{\nabla f}\nabla f)\Big\}\\
&=&\frac{1}{2}\langle\nabla R,\nabla f\rangle-|\nabla^{2}f-\frac{\Delta f}{n}g|^{2}-\frac{m+n}{nm}(\Delta f)^{2}+(n-1)\Delta\lambda+\lambda\Delta f\\&+& \frac{1}{2}\langle\nabla R,\nabla f\rangle+\frac{1}{2}\langle\nabla f,\nabla \Delta f\rangle-\frac{n}{2}\langle\nabla \lambda,\nabla f\rangle\\&-&\frac{1}{m}\langle\nabla\Delta f,\nabla f\rangle+\frac{1}{m}div\,(\nabla_{\nabla f}\nabla f).
\end{eqnarray*}

We now group terms to arrive at the desired result, hence we complete the proof of the lemma.

\end{proof}

As a consequence of this lemma we obtain the following integral formulae.

\begin{theorem}
\label{thm5}
Let $\big(M^{n},\,g,\,\nabla f,\,\lambda \big)$ be a compact orientable generalized $m$-quasi-Einstein metric. Then we have.

\begin{enumerate}
\item $\int_{M}|\nabla^{2}f-\frac{\Delta f}{n}g|^{2}d\mu+\frac{n+2}{2n}\int_{M}(\Delta f)^{2}d\mu=\int_{M}\langle\nabla f,\nabla R\rangle d\mu-\frac{n+2}{2}\int_{M}\langle\nabla f,\nabla\lambda\rangle d\mu.$
\item $\int_{M}\big(Ric\,(\nabla f,\nabla f)+\langle\nabla f,\nabla R\rangle\big) d\mu=\frac{3}{2}\int_{M}(\Delta f)^{2}d\mu+\frac{n+2}{2}\int_{M}\langle\nabla f,\nabla\lambda\rangle d\mu.$

\item $M^n$ is trivial, provided $\int_{M}\langle\nabla R,\nabla f\rangle d\mu\leq\frac{n+2}{2}\int_{M}\langle\nabla f,\nabla\lambda\rangle d\mu.$

\item $\int_{M}|\nabla^{2}f-\frac{\Delta f}{n}g|^{2}d\mu=\frac{n-2}{2n}\int_{M}\langle\nabla f,\nabla R\rangle d\mu-\frac{n+2}{2nm}\int_{M}|\nabla f|^{2}\Delta f d\mu.$

\end{enumerate}
\end{theorem}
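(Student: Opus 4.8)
The plan is to derive all four identities by integrating the pointwise formula of Lemma \ref{lem4} over the compact orientable manifold $M^n$ and then feeding the resulting relation back into the first-order identities already established in Section 2.

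First I would integrate the expression for $\frac{1}{2}\Delta R$ from Lemma \ref{lem4}. On a closed manifold the three divergence-type terms disappear, namely $\int_M \Delta R\,d\mu = \int_M \Delta\lambda\,d\mu = \int_M \mathrm{div}\big(\nabla_{\nabla f}\nabla f\big)\,d\mu = 0$. For the remaining first-order terms I would integrate by parts, using $\int_M \langle\nabla f,\nabla\Delta f\rangle\,d\mu = -\int_M (\Delta f)^2\,d\mu$ and $\int_M \lambda\Delta f\,d\mu = -\int_M \langle\nabla\lambda,\nabla f\rangle\,d\mu$. Collecting the $(\Delta f)^2$ coefficients yields $-\frac{m+n}{nm} - \frac{m-2}{2m} = -\frac{n+2}{2n}$, and collecting the $\langle\nabla f,\nabla\lambda\rangle$ coefficients yields $-\frac{n}{2} - 1 = -\frac{n+2}{2}$; rearranging the resulting equation is precisely statement (1). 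Statement (3) is then immediate: under the hypothesis the right-hand side of (1) is nonpositive while the left-hand side is a sum of two nonnegative integrals, so both vanish; hence $\Delta f\equiv 0$, and Hopf's theorem forces $f$ to be constant, i.e. $M^n$ is trivial.

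To obtain statements (2) and (4) I would use one auxiliary integral identity coming from the trace equation: integrating (\ref{eqntrace2}) over $M$ and integrating by parts gives $n\int_M\langle\nabla\lambda,\nabla f\rangle\,d\mu = \int_M\langle\nabla R,\nabla f\rangle\,d\mu - \int_M(\Delta f)^2\,d\mu + \frac{1}{m}\int_M|\nabla f|^2\Delta f\,d\mu$. Substituting this into (1) to eliminate $\int_M\langle\nabla\lambda,\nabla f\rangle\,d\mu$ and simplifying the constants ($1 - \frac{n+2}{2n} = \frac{n-2}{2n}$) produces statement (4). For statement (2) I would integrate the first identity of Lemma \ref{lem1}, rewrite $|\nabla^2 f|^2 = |\nabla^2 f - \frac{\Delta f}{n}g|^2 + \frac{1}{n}(\Delta f)^2$, use (1) to substitute for $\int_M |\nabla^2 f - \frac{\Delta f}{n}g|^2\,d\mu$, and then use the auxiliary identity once more to convert the term $\frac{2}{m}\int_M|\nabla f|^2\Delta f\,d\mu$ into a combination of $\int_M\langle\nabla R,\nabla f\rangle\,d\mu$, $\int_M\langle\nabla\lambda,\nabla f\rangle\,d\mu$ and $\int_M(\Delta f)^2\,d\mu$; the $\langle\nabla R,\nabla f\rangle$ contributions then cancel and the remaining coefficients collapse to $\frac{3}{2}$ and $\frac{n+2}{2}$, giving (2).

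There is no conceptual obstacle here; the entire argument is a string of integrations by parts on a closed manifold, together with the pointwise formulae of Lemmas \ref{lem1} and \ref{lem4}. The one point demanding genuine care is the bookkeeping of the numerical coefficients — in particular checking that the two $(\Delta f)^2$ contributions in the integrated Lemma \ref{lem4} really combine to $-\frac{n+2}{2n}$, and that in the derivation of (2) the $\int_M\langle\nabla R,\nabla f\rangle\,d\mu$ and $\int_M|\nabla f|^2\Delta f\,d\mu$ terms cancel exactly after the auxiliary identity is inserted. I would prove (1) first and then treat it as the single source from which (2), (3) and (4) follow, with (\ref{eqntrace2}) and the first identity of Lemma \ref{lem1} supplying the one extra relation needed for (2) and (4).
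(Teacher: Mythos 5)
Your proposal is correct and follows essentially the same route as the paper: integrate Lemma \ref{lem4} with Stokes' formula to get (1), read off (3) from the sign of both sides, and use the integrated trace identity to pass to (4) and (2); the coefficient computations $-\frac{m+n}{nm}-\frac{m-2}{2m}=-\frac{n+2}{2n}$ and $\frac{n-1}{n}+\frac{n+2}{2n}=\frac{3}{2}$ check out. The only (immaterial) difference is that for (2) the paper integrates the raw Bochner formula to get $\int_M Ric(\nabla f,\nabla f)\,d\mu+\int_M|\nabla^2f-\frac{\Delta f}{n}g|^2\,d\mu=\frac{n-1}{n}\int_M(\Delta f)^2\,d\mu$ and combines it with (1), whereas you integrate the first identity of Lemma \ref{lem1} (which is Bochner plus the structure equation) and then eliminate the extra $\frac{2}{m}\int_M|\nabla f|^2\Delta f\,d\mu$ term with the auxiliary identity; both yield the same result.
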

\begin{proof} Since $M^n$ is compact we use Lemma \ref{lem4} and Stokes' formula to infer
\begin{eqnarray*}
\int_{M}|\nabla^{2}f-\frac{\Delta f}{n}g|^{2}d\mu&=&-\Big(\frac{m+n}{nm}\Big)\int_{M}(\Delta f)^{2}d\mu-\Big(\frac{m-2}{2m}\Big)\int_{M}(\Delta f)^{2}d\mu\\&-&\frac{n}{2}\int_{M}\langle\nabla\lambda,\nabla f\rangle d\mu-\int_{M}\langle\nabla\lambda,\nabla f\rangle d\mu+\int_{M}\langle\nabla f,\nabla R\rangle d\mu.
\end{eqnarray*}
Therefore, we obtain
\begin{equation}
\label{intphif2}
\int_{M}\Big(|\nabla^{2}f-\frac{\Delta f}{n}g|^{2}+\frac{n+2}{2n}(\Delta f)^{2}\Big)d\mu=\int_{M}\langle\nabla f,\nabla R\rangle d\mu-\frac{n+2}{2}\int_{M}\langle\nabla f,\nabla\lambda\rangle d\mu,
\end{equation}which gives the first statement.

Next, we integrate Bochner's formula to get
\begin{equation}
\label{intboch}
\int_{M}Ric\,(\nabla f,\nabla f)d\mu+\int_{M}|\nabla^{2}f|^{2}d\mu+\int_{M}\langle\nabla f,\nabla\Delta f\rangle d\mu=0.
\end{equation}
Since $\int_{M}|\nabla^{2}f-\frac{\Delta f}{n}g|^{2}d\mu=\int_{M}|\nabla^{2}f|^{2}d\mu-\frac{1}{n}\int_{M}(\Delta f)^{2}d\mu$ we use Stokes' formula once more to deduce
\begin{equation}
\label{eqco1-2}
\int_{M}Ric\,(\nabla f,\nabla f)d\mu+\int_{M}|\nabla^{2}f-\frac{\Delta f}{n}g|^{2}d\mu=\frac{n-1}{n}\int_{M}(\Delta f)^{2}d\mu.
\end{equation}

Now, comparing (\ref{intphif2}) with (\ref{eqco1-2}) we obtain
$$\int_{M}\big(Ric\,(\nabla f,\nabla f)+\langle\nabla f,\nabla R\rangle\big)d\mu=\frac{3}{2}\int_{M}(\Delta f)^{2}d\mu+\frac{n+2}{2}\int_{M}\langle\nabla f,\nabla\lambda\rangle d\mu,$$ that was to be proved.

On the other hand, if $\int_{M}\langle\nabla R,\nabla f\rangle d\mu\leq\frac{n+2}{2}\int_{M}\langle\nabla f,\nabla\lambda\rangle d\mu,$ in particular this occurs if $R$ and $\lambda$ are both constant, we deduce from the first assertion
\begin{equation}
\label{eqncor1}
\int_{M}|\nabla^{2}f-\frac{\Delta f}{n}g|^{2}d\mu+\frac{n+2}{2n}\int_{M}(\Delta f)^{2}d\mu=0,
 \end{equation}which implies that $f$ must be constant, so $M^n$ is trivial.

Finally, from (\ref{eqntrace}) we can write $\int_{M}\langle\nabla f,\nabla\lambda\rangle d\mu=\frac{1}{n}\int_{M}\langle\nabla f,\nabla(R+\Delta f-\frac{1}{m}|\nabla f|^{2})\rangle d\mu.$ Hence, by using equation (\ref{intphif2}) we infer
\begin{eqnarray*}
\int_{M}\Big(|\nabla^{2}f-\frac{\Delta f}{n}g|^{2}+\frac{n+2}{2n}(\Delta f)^{2}\Big)d\mu&=&\frac{n-2}{2n}\int_{M}\langle\nabla f,\nabla R\rangle d\mu+\frac{n+2}{2n}\int_{M}(\Delta f)^{2}d\mu\nonumber\\&+&\frac{n+2}{2nm}\int_{M}\langle\nabla f,\nabla|\nabla f|^{2}\rangle d\mu.
\end{eqnarray*}
Therefore, after cancelations and using Stokes' formula, we deduce
\begin{equation*}
\int_{M}|\nabla^{2}f-\frac{\Delta f}{n}g|^{2}d\mu=\frac{n-2}{2n}\int_{M}\langle\nabla f,\nabla R\rangle d\mu-\frac{n+2}{2nm}\int_{M}|\nabla f|^{2}\Delta fd\mu,
\end{equation*}which completes the proof of the theorem.
\end{proof}

Now we remember that for a conformal vector field $X$ on a compact Riemannian manifold $M^n$ we have $\int_{M}\mathcal{L}_{X}Rd\mu=\int_{M}\langle X,\nabla R\rangle d\mu=0$, see e.g. \cite{bezin}. On the other hand, from Lemma \ref{lemflat} we also have $\int_{M}|X|^2 div X d\mu=0$. Hence , using the last item  of the above theorem we deduce that the converse of those two results are true for a gradient vector field. More exactly, we have the following corollary.

\begin{corollary}Let $\big(M^{n},\,g,\,\nabla f,\,\lambda \big)$ be a compact orientable generalized $m$-quasi-Einstein metric with $m$ finite. Then we have.
\begin{enumerate}
\item If $n\ge3, \,\int_{M}\langle\nabla f,\nabla R\rangle d\mu=0$ and $\int_{M}|\nabla f|^2 \Delta f d\mu=0$, then $\nabla f$ is a conformal vector field.
\item If $n=2$ and $\int_{M}|\nabla f|^2 \Delta f d\mu=0$, then $f$ is constant.
\end{enumerate}
\end{corollary}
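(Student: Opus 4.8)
The plan is to deduce everything from the fourth integral identity of Theorem~\ref{thm5}, which holds because $M^{n}$ is compact, orientable and $m$ is finite:
\[
\int_{M}\Big|\nabla^{2}f-\frac{\Delta f}{n}g\Big|^{2}d\mu=\frac{n-2}{2n}\int_{M}\langle\nabla f,\nabla R\rangle\,d\mu-\frac{n+2}{2nm}\int_{M}|\nabla f|^{2}\Delta f\,d\mu .
\]

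For item (1), assume $n\ge 3$ together with $\int_{M}\langle\nabla f,\nabla R\rangle\,d\mu=0$ and $\int_{M}|\nabla f|^{2}\Delta f\,d\mu=0$. Then the right-hand side above vanishes, so $\int_{M}|\nabla^{2}f-\frac{\Delta f}{n}g|^{2}d\mu=0$, whence $\nabla^{2}f=\frac{\Delta f}{n}g$ at every point. Equivalently $\tfrac12\mathcal{L}_{\nabla f}g=\frac{\Delta f}{n}g$, i.e. $\nabla f$ is a (gradient) conformal vector field, which is the claim.

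For item (2), set $n=2$. Then the coefficient $\frac{n-2}{2n}$ is zero and $\frac{n+2}{2nm}=\frac{1}{m}$, so the identity reduces to
\[
\int_{M}\Big|\nabla^{2}f-\frac{\Delta f}{2}g\Big|^{2}d\mu=-\frac{1}{m}\int_{M}|\nabla f|^{2}\Delta f\,d\mu=0
\]
by hypothesis. Hence $\nabla^{2}f=\frac{\Delta f}{2}g$, so $\nabla f$ is a conformal vector field on the surface $M^{2}$. Since every surface is Einstein ($Ric=\frac{R}{2}g$), I can then apply Theorem~\ref{thm2}(2) to conclude $\nabla f\equiv 0$; alternatively, specializing (\ref{eqntrfr}) to $n=2$ gives $\nabla^{2}f-\frac{\Delta f}{2}g=\frac1m\big(df\otimes df-\frac12|\nabla f|^{2}g\big)$, so $df\otimes df=\frac12|\nabla f|^{2}g$, and Lemma~\ref{lemflat}(1) forces $|\nabla f|^{2}\equiv 0$. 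Either way $f$ is constant.

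I do not expect a genuine obstacle: the corollary is a repackaging of Theorem~\ref{thm5}(4) together with Lemma~\ref{lemflat} and Theorem~\ref{thm2}(2). The only step needing a moment's care is the passage, in item (2), from ``$\nabla f$ conformal on a surface'' to ``$f$ constant''; this is precisely where the finiteness of $m$ enters, through Lemma~\ref{lemflat}(1) applied to $df\otimes df=\frac12|\nabla f|^{2}g$ (equivalently, through the finiteness hypothesis of Theorem~\ref{thm2}(2)).
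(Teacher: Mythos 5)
Your proposal is correct and follows essentially the same route as the paper: both deduce item (1) directly from the fourth identity of Theorem \ref{thm5}, observe that for $n=2$ the $\langle\nabla f,\nabla R\rangle$ term drops out so only the second hypothesis is needed to make $\nabla f$ conformal, and then invoke Theorem \ref{thm2}(2) (a surface being Einstein in the sense $Ric=\tfrac{R}{2}g$) to conclude $f$ is constant. Your alternative ending via (\ref{eqntrfr}) and Lemma \ref{lemflat}(1) is just Theorem \ref{thm2}(2) unwound, so it adds nothing new but is a correct check.
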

\begin{proof}For the first statement we use the last item of Theorem \ref{thm5} to deduce $\nabla^{2}f=\frac{\Delta f}{n}g$, which gives that $\nabla f$ is conformal. Next, we notice that for $n=2,$ it is enough to suppose $\int_{M}|\nabla f|^2 \Delta f d\mu=0$ to conclude that $\nabla f$ is conformal. But, using Theorem \ref{thm2} we conclude that $f$ is constant, which completes the proof of the corollary.

\end{proof}

\end{document}